\title{Hilbert space compression under direct limits and certain group extensions}
\author{Dennis Dreesen}
\address{ K.U.Leuven campus Kortrijk\\
Etienne Sabbelaan 53\\
8500 Kortrijk}
\email{dennis.dreesen@kuleuven-kortrijk.be}
\address{Universit\'e de Neuch\^{a}tel\\
Institut de math\'ematiques\\
Rue Emile-Argand 11\\
2009 Neuch\^{a}tel}
\email{dennis.dreesen@unine.ch}
\thanks{The author is a research assistant for the Research Foundation - Flanders.}
\newtheorem{theorem}{Theorem}[section]
\newtheorem{definition}[theorem]{Definition}
\newtheorem{proposition}[theorem]{Proposition}
\newtheorem{lm}[theorem]{Lemma}
\newtheorem{corollary}[theorem]{Corollary}
\newtheorem{ex}[theorem]{Example}
\newtheorem{remark}[theorem]{Remark}
\newcommand{\Z}{\mathbb{Z}}
\newcommand{\N}{\mathbb{N}}
\newcommand{\R}{\mathbb{R}}
\newcommand{\h}{\mathcal{H}}
\newcommand{\Exp}{\mbox{Exp}}
\begin{document}
\begin{abstract}
We find bounds on the Hilbert space compression of the limit of a directed metric system of groups. We also give estimates on the Hilbert space compression of group extensions of a group $H$ by a a word-hyperbolic group or a group of polynomial growth.
\end{abstract}
\maketitle
\section{Introduction}
In \cite{Gromov}, Gromov introduced the notion of uniform embeddability of a finitely generated group into a Hilbert space and suggested that such a group would satisfy the Novikov Conjecture \cite{Gromov2}. Six years later, Yu came up with a formal proof of this claim \cite{Yu}. Moreover, together with Skandalis and Tu, he proved that such uniformly embeddable groups also satisfy the coarse Baum-Connes Conjecture \cite{Yu:2}.

\begin{definition} 
A metric space $(X,d)$ is {\em uniformly embeddable in a Hilbert space}, if there exist a Hilbert space $\h$, non-decreasing functions $\rho_-, \rho_+:\R^+ \rightarrow \R^+$ such that $\lim_{t\to \infty} \rho_-(t)=+\infty$, and a map $f:X\rightarrow \h$, such that
\[ \rho_-(d(x,y)) \leq d(f(x),f(y)) \leq \rho_+(d(x,y)) \ \forall x,y\in X.\]
The map $f$ is called a uniform embedding of $X$ in $\h$. It is called large-scale Lipschitz whenever $\rho_+$ can be taken of the form $\rho_+:t\mapsto Ct+D$ for some $C>0, D\geq 0$. It is Lipschitz if we can take $D=0$. \label{def:1}
\end{definition}

Two length functions $l_1$ and $l_2$ on a group (see Definition \ref{def:lengthfunction}) are coarsely equivalent if for every $R>0$ there exists $S>0$ such that the $l_1$-ball $B(1,R)$ with radius $R$ and center $1$ is contained in the $l_2$-ball $B(1,S)$; and conversely. Clearly in this case, $(X,l_1)$ is uniformly embeddable if and only if $(X,l_2)$ is uniformly embeddable. Lemma $2.1$ in \cite{Tu} shows that every discrete countable group $X$ admits a unique proper length function up to coarse equivalence, enabling us to define the concept of discrete countable uniformly embeddable group. This class of groups and its permanence properties have been very well studied, for example by Guentner and Dadarlat in \cite{guedad}. For our purposes and theirs, the following reformulation of their Proposition $2.1$, which holds for any metric space, is vital.

\begin{proposition}
Let $(X,d)$ be a metric space. Then $X$ is uniformly embeddable in a Hilbert space if and only if for every $n>0$ there exist $S_n >0$ and  a Hilbert space valued map $\xi_n:X\rightarrow \h, x\to \xi_n^x$ such that $\parallel \xi_n^x \parallel=1$ for all $x\in X$ and such that
\begin{enumerate}
\item $\parallel \xi_n^x - \xi_n^{x'} \parallel \leq \frac{1}{n}$ provided $d(x,x') \leq \sqrt{n}$,
\item $\parallel \xi_n^x - \xi_n^{x'} \parallel \geq 1$ provided $d(x,x')\geq S_n$.
\end{enumerate} \label{prop:unifembedintro}
\end{proposition}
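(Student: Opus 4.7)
The proposition is an equivalence, so I would prove both implications separately.

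For the direction $(\Rightarrow)$, starting from a uniform embedding $f : X \to \h$ with control functions $\rho_-, \rho_+$, the plan is to apply the classical Schoenberg-type construction: for every $t > 0$ the kernel $K_t(v,w) := e^{-t\|v - w\|^2}$ is positive definite on $\h$, so it factors through some map $\eta_t : \h \to \mathcal{K}_t$ into a Hilbert space $\mathcal{K}_t$ with $\|\eta_t(v)\| = 1$ and $\|\eta_t(v) - \eta_t(w)\|^2 = 2\bigl(1 - e^{-t\|v-w\|^2}\bigr)$. I would then set $\xi_n^x := \eta_{t_n}(f(x))$, choose $t_n$ so small that $2\bigl(1 - e^{-t_n \rho_+(\sqrt{n})^2}\bigr) \leq 1/n^2$, and finally pick $S_n$ so large that $t_n \rho_-(S_n)^2 \geq \ln 2$; the latter is possible because $\rho_-(t) \to \infty$. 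The first choice yields (1) and the second yields (2).

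For $(\Leftarrow)$, I would fix a basepoint $x_0 \in X$ and define
\[
f(x) := \bigoplus_{n \geq 1} \bigl(\xi_n^x - \xi_n^{x_0}\bigr).
\]
To check that $f(x)$ actually lies in $\bigoplus_n \h$, I would split the series $\sum_n \|\xi_n^x - \xi_n^{x_0}\|^2$ into indices with $n < d(x,x_0)^2$ (only finitely many, each summand bounded by $4$ since $\|\xi_n^\cdot\| = 1$) and indices with $n \geq d(x,x_0)^2$ (each summand bounded by $1/n^2$ by (1)), so the series converges. The same dichotomy applied to $\sum_n \|\xi_n^x - \xi_n^{x'}\|^2$ produces an upper control function $\rho_+$. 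For the lower control, I would assume without loss of generality that $(S_n)$ is non-decreasing; then condition (2) implies that whenever $d(x,x') \geq S_n$ the $n$-th summand contributes at least $1$ to $\|f(x) - f(x')\|^2$, so this quantity is bounded below by $\#\{n : S_n \leq d(x,x')\}$, which tends to infinity as $d(x,x') \to \infty$, giving a valid $\rho_-$.

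The main obstacle is the forward direction: the positive-definiteness of $e^{-t\|v - w\|^2}$ on an arbitrary Hilbert space is not elementary but rests on Schoenberg's theorem (squared distance is a conditionally negative definite kernel, and $e^{-t\,\cdot}$ sends such kernels to positive definite ones). Without this input one would have to replace $\eta_t$ by a more hands-on construction such as normalized indicator functions of balls in $\h$, which is considerably messier. In the reverse direction, the only real point of care is that $f$ lands in a Hilbert space at all, and condition (1) is precisely the mechanism engineered to guarantee this.
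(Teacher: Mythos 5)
Your proof is correct and follows essentially the same route as the paper: the forward direction via the positive definite Gaussian kernel $e^{-t\|v-w\|^2}$ (the paper realizes the map $\eta_t$ you invoke explicitly as $\zeta\mapsto e^{-t\|\zeta\|^2}\Exp(\sqrt{2t}\,\zeta)$ into the symmetric Fock space $\Exp(\h)$, which is exactly the ``hands-on construction'' you mention as a fallback), and the converse via the basepoint direct sum $\bigoplus_n(\xi_n^x-\xi_n^{x_0})$ with the same dichotomy $n< d(x,x')^2$ versus $n\geq d(x,x')^2$ and the same counting lower bound $\rho_-(t)^2\approx\#\{n: S_n\leq t\}$.
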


The {\em speed at which $(S_n)_{n\in \N_0}$ tends to infinity} is an indication on {\em how uniformly embeddable} a metric space really is. For example, if $n\mapsto S_n$ is bounded by a polynomial map in $n$, it would make sense to say that the corresponding space is {\em more uniformly embeddable} than a space for which $n\mapsto S_n$ is only bounded by an exponential map in $n$. Another, more standard way of describing {\em how uniformly embeddable} a metric space $X$ really is, is by looking at the supremum of $\delta \geq 0$ such that there is a large-scale Lipschitz uniform embedding of $X$ and numbers $C',D'>0$ such that $\rho_-$ in Definition \ref{def:1} can be taken of the from
$r\mapsto \frac{1}{C'} r^\delta - D'$. This supremum is called the Hilbert space compression of $X$ \cite{guekam}. For $X$ a group, it must be noted that  Hilbert space compression is a quasi-isometric invariant, but no longer a coarse invariant. Therefore, it is important that we always specify the chosen length function.

Looking closely at the proof of Proposition \ref{prop:unifembedintro}, one finds a connection between the Hilbert space compression of a metric space and the growth of the sequence $(S_n)_{n\in \N_0}$. In this note, we try roughly to exploit this connection and then use techniques from \cite{guedad} to get concrete information about the behaviour of the Hilbert space compression of groups under taking direct limits and under taking certain group extensions. Regarding group extensions, we prove the following results in Section \ref{section:extensions} (see Theorems \ref{th:uitbreiding} and \ref{th:extension}).
\begin{theorem}
 Assume that $\Gamma$ is a group, equipped with some length function $l=l_{\Gamma}$, that fits in a short exact sequence
\[ 1 \rightarrow H \rightarrow \Gamma \stackrel{\pi}{\rightarrow} G \rightarrow 1. \]
Define a length function $l_G$ on $G$ by setting $l_G(\pi(x))=\inf\{l(y)\mid \pi(y)=\pi(x)\}$. If $G$ with the induced metric from $\Gamma$ has polynomial growth and if $H$ with the induced metric from $\Gamma$ has compression $\delta$, then the compression of $\Gamma$ is at least $\delta/4$.
\label{th:uitbreidingintro}
\end{theorem}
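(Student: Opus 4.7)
The plan is to apply Proposition \ref{prop:unifembedintro} quantitatively. There is a dictionary, implicit in \cite{guedad}, linking the compression of a space $X$ to the polynomial order in $n$ of the best sequence $(S_n)$ for which unit-vector maps $\xi_n$ verifying (1)-(2) exist: on the one hand, a large-scale Lipschitz uniform embedding with $\rho_-(r) \geq C^{-1} r^\delta - D$ produces, via the Schoenberg/Gaussian construction (the kernel $(x,y) \mapsto e^{-t^2 \|f(x)-f(y)\|^2/2}$ realized in a Fock space), maps $\xi_n$ with $S_n$ polynomial in $n$ of degree essentially $2/\delta$; on the other hand, any such $\xi_n$ with $S_n \leq C n^\beta$ forces the compression of $X$ to be at least $1/(2\beta)$, by summing the kernels $1 - \langle \xi_n^x, \xi_n^y \rangle$ with appropriate weights. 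Hence to obtain $\Gamma$-compression $\geq \delta/4$, it suffices to build $\xi_n^\Gamma$ on $\Gamma$ satisfying (1)-(2) with $S_n^\Gamma$ polynomial of degree at most $2/\delta$.

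With this reduction in hand, fix a set-theoretic section $s : G \to \Gamma$ of $\pi$ realizing the infimum defining $l_G$, so that $l(s(g)) = l_G(g)$, and write each $\gamma \in \Gamma$ uniquely as $\gamma = k(\gamma) \cdot s(\pi(\gamma))$ with $k(\gamma) \in H$. From the compression hypothesis on $(H, l|_H)$ we obtain, for every $\epsilon > 0$, maps $\xi_n^H : H \to \h_H$ with $S_n^H \leq C_1 n^{2/(\delta - \epsilon)}$. From the polynomial growth of $(G, l_G)$ we produce maps $\xi_n^G : G \to \h_G$ with $S_n^G$ polynomial in $n$, for instance by setting $\xi_n^G(g)$ to be the normalized indicator of $B_G(g, r_n)$ for $r_n$ of order $\sqrt{n}$: polynomial volume growth is exactly what makes the overlap of two such balls close to $1$ for close centers and forces the centers of disjoint (or nearly disjoint) balls to lie polynomially far apart. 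Following the strategy of Dadarlat-Guentner, we assemble these into
\[ \xi_n^\Gamma(\gamma) \;=\; \xi_{n'}^G\bigl(\pi(\gamma)\bigr) \;\otimes\; \xi_{n''}^H\bigl(k(\gamma)\bigr) \]
for appropriate polynomial choices of $n', n''$ in $n$. Condition (1) of Proposition \ref{prop:unifembedintro} is then straightforward: by construction of $l_G$ the projection $\pi$ is $1$-Lipschitz, so $d_G(\pi(\gamma), \pi(\gamma')) \leq d_\Gamma(\gamma,\gamma')$, and a triangle-inequality argument bounds the induced $H$-length of $k(\gamma)^{-1}k(\gamma')$ in terms of $l(\gamma^{-1}\gamma')$ and $l_G(\pi(\gamma))$.

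The main obstacle is condition (2), and this is where polynomial growth of $G$ is genuinely needed. The factorization $\gamma = k(\gamma)s(\pi(\gamma))$ is only set-theoretic: conjugation by $s(\pi(\gamma))$ distorts $\Gamma$-lengths of elements of $H$ by an additive error of $2l_G(\pi(\gamma))$, so when $\pi(\gamma)$ and $\pi(\gamma')$ are close in $G$ but $\gamma, \gamma'$ are far apart in $\Gamma$, the induced $H$-length of $k(\gamma)^{-1}k(\gamma')$ matches $l(\gamma^{-1}\gamma')$ only up to an error proportional to $l_G(\pi(\gamma))$. The remedy is to treat the two regimes separately: outside a ball of polynomial radius in $G$, the factor $\xi_{n'}^G$ already separates by virtue of the polynomial bound on $S_n^G$; inside that ball, the conjugation distortion is itself of polynomial size and is absorbed into the polynomial bound $S_n^H \leq C_1 n^{2/(\delta-\epsilon)}$ governing the $H$-factor. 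Tracking constants, the resulting $S_n^\Gamma$ is polynomial in $n$ of degree at most $2/(\delta - \epsilon)$, and the dictionary of the first paragraph then delivers compression at least $(\delta - \epsilon)/4$ for $\Gamma$; letting $\epsilon \downarrow 0$ gives the bound. The factor $1/4 = (1/2)\cdot(1/2)$ records the two invocations of the Schoenberg trick, one converting $\delta$-compression of $H$ into a polynomial $S_n^H$, the other converting the assembled $S_n^\Gamma$ back into a compression lower bound for $\Gamma$.
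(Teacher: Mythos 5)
Your overall architecture (quantify Proposition \ref{prop:unifembedintro}, build unit--vector fields on $G$ from polynomial growth and on $H$ from its compression, assemble them \`a la Dadarlat--Guentner, convert the resulting $S_n^\Gamma$ back into a compression bound) is the paper's architecture. But the assembly you actually write down, $\xi_n^\Gamma(\gamma)=\xi_{n'}^G(\pi(\gamma))\otimes\xi_{n''}^H(k(\gamma))$, does not work, and the step you call ``straightforward'' is exactly where it breaks. Writing $k(\gamma)=\gamma\, s(\pi(\gamma))^{-1}$, one has $k(\gamma)^{-1}k(\gamma')=s(\pi(\gamma))\,\gamma^{-1}\gamma'\,s(\pi(\gamma'))^{-1}$, so the only available bound on the induced length of $k(\gamma)^{-1}k(\gamma')$ is $d_\Gamma(\gamma,\gamma')+l_G(\pi(\gamma))+l_G(\pi(\gamma'))$; since $l_G(\pi(\gamma))$ is unbounded over $\Gamma$, condition (1) fails for the plain tensor product (and condition (2) fails symmetrically). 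Your two--regime remedy does not repair this: the dichotomy that matters is not whether $\pi(\gamma)$ lies in some polynomial ball, but how far $\pi(\gamma)$ and $\pi(\gamma')$ are from the \emph{identity}, and they can be simultaneously close to each other and arbitrarily far from $1$. The actual fix, which is the content of Theorem 4.1 of \cite{guedad} and the reason Lemma \ref{lm:directAgrowth} carries the support condition $\mathrm{supp}(g_n(x))\subset B(x,S_n^G)$, is to work in $l^2(G,\h)$ and let the $H$-vector sitting over the coordinate $g\in G$ be evaluated at an element translated back by a coset representative of $g$; then only those $g$ within distance $S_n^G$ of $\pi(\gamma)$ contribute, the conjugation distortion is capped by $2S_n^G$ uniformly in $\gamma$, and this is precisely why the paper demands that $h_n$ be almost constant on balls of radius $2S_n^G+\sqrt n$ rather than $\sqrt n$.

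Two further quantitative gaps. First, the normalized indicator of $B_G(g,r_n)$ with $r_n\sim\sqrt n$ does not satisfy condition (1): in $\Z^d$, two balls of radius $\sqrt n$ whose centers are $\sqrt n$ apart have symmetric difference a definite fraction of their volume. One must pigeonhole over radii (the F\o lner--type argument in Lemma \ref{lm:directAgrowth}), and the good radius is of order $n^{3/2}$; this is where $S_n^G\sim n^{3/2}$ comes from. Second, your bound $S_n^H\le C_1 n^{2/(\delta-\epsilon)}$ does not follow from the compression of $H$ alone: Corollary \ref{cor:explicit} with $R_n=\sqrt n$ gives degree about $1/\delta$. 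The degree $2/\delta$ appears only because the almost--constancy radius of the $H$-vectors must be enlarged to $2S_n^G+\sqrt n\sim n^{3/2}$, i.e.\ $r=3/2$ in Corollary \ref{cor:explicit}. Consequently your closing accounting, $1/4=(1/2)\cdot(1/2)$ ``from two invocations of the Schoenberg trick'', is not the right mechanism: one factor of $2$ is the generic loss in converting $(S_n)$ back into compression (forced by the summability $\sum\epsilon_n^2<\infty$ needed for the Lipschitz upper bound), and the other comes from the support radius of the $G$-vectors feeding into the scale at which the $H$-vectors must be controlled. As written, the proposal asserts the correct exponents without the construction that produces them.
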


\begin{theorem}
Assume that $\Gamma$ is a finitely generated group, equipped with the word length function $l=l_{\Gamma}$ relative to some finite symmetric generating subset $S$ and that it  fits in a short exact sequence
\[ 1 \rightarrow H \rightarrow \Gamma \stackrel{\pi}{\rightarrow} G \rightarrow 1. \]
Equip $G$ with the word length function $l_G$ relative to $\pi(S)$. If $G$ is a finitely generated hyperbolic group in the sense of Gromov \cite{Gromovhyperbolicgroups} and if $H$, with the induced metric from $\Gamma$, has Hilbert space compression $\delta$, then the Hilbert space compression of $\Gamma$ is at least $\delta/5$.
\label{th:extensionintro}
\end{theorem}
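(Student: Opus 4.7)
The plan is to mirror the strategy of Theorem~\ref{th:uitbreidingintro}, replacing the role played by the polynomial growth of $G$ by the hyperbolic geometry of $G$. By Proposition~\ref{prop:unifembedintro}, Hilbert space compression of a space translates into a quantitative control on the growth of a sequence $(S_n)_{n\in\N}$; I therefore aim to build, for every $n$, a unit-norm Hilbert space valued map $\xi_n^\Gamma:\Gamma\to\h_n$ satisfying conditions (1) and (2) with an explicit $S_n^\Gamma$, and then read off a lower bound on the compression of $\Gamma$ from the growth rate of $(S_n^\Gamma)$.

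The construction will be a tensor product $\xi_n^\Gamma(\gamma):=\eta_n^G(\pi(\gamma))\otimes\xi_n^H(h(\gamma))$, where $\xi_n^H:H\to\h_n^H$ comes directly from the compression hypothesis on $H$ via Proposition~\ref{prop:unifembedintro} (with $S_n^H$ polynomial in $n$ of degree dictated by $\delta$), $\eta_n^G:G\to\h_n^G$ is an auxiliary family built from the hyperbolic geometry of $G$, and $h:\Gamma\to H$ is defined by $\gamma=s(\pi(\gamma))h(\gamma)$ for a fixed set-theoretic section $s:G\to\Gamma$ of $\pi$. To obtain $\eta_n^G$ I will exploit the thin-triangle condition in $G$: a finitely generated hyperbolic group admits tree-like decompositions (for instance via Bonk--Schramm-type embeddings of finite subsets into trees with additive logarithmic error, or equivalently via quantitative partitions of unity subordinate to controlled covers coming from finite asymptotic dimension), and these supply a unit-norm family $\eta_n^G$ with $S_n^G$ polynomial of controllable degree. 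Once both ingredients are in place, the verification of conditions (1) and (2) for $\xi_n^\Gamma$ reduces, via the tensor-product inequality and a triangle-type argument, to separate estimates on the $G$- and $H$-components.

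The main obstacle I anticipate is controlling the defect of the section $s$, i.e.\ the $H$-valued cocycle $(g,g')\mapsto s(g)s(g')s(gg')^{-1}$. For $\gamma_1,\gamma_2\in\Gamma$ at $l_\Gamma$-distance at most $r$, one needs to bound the distance (in the metric on $H$ induced from $\Gamma$) between $h(\gamma_1)$ and $h(\gamma_2)$, and the naive bound on this drift is insufficient; hyperbolicity of $G$ is the precise tool used to route the estimate along geodesic quadrilaterals whose thinness keeps the cocycle close to a tree cocycle. Tracking the resulting polynomial loss through the tensor product---one extra polynomial exponent compared with the direct ball-counting argument available for polynomial growth---is what I expect to convert the $\delta/4$ of Theorem~\ref{th:uitbreidingintro} into the $\delta/5$ announced here. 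The bulk of the remaining technical work will be in making these quantitative estimates precise and in reconciling the word metric on $H$ with the $\Gamma$-induced metric appearing in the hypothesis.
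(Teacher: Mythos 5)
Your high-level plan (a $G$-side family from hyperbolicity, an $H$-side family from Corollary~\ref{cor:explicit}, polynomial control of $S_n^\Gamma$, then read off $\delta/5$) matches the paper's, but the construction you propose for combining the two families does not work, and the tool you invoke to rescue it is not available. The single global tensor product $\xi_n^\Gamma(\gamma)=\eta_n^G(\pi(\gamma))\otimes\xi_n^H(h(\gamma))$ with $h(\gamma)=s(\pi(\gamma))^{-1}\gamma$ forces you to compare $h(\gamma_1)$ and $h(\gamma_2)$ directly whenever $d_\Gamma(\gamma_1,\gamma_2)$ is small, and $h(\gamma_1)^{-1}h(\gamma_2)=\gamma_1^{-1}s(\pi(\gamma_1))s(\pi(\gamma_2))^{-1}\gamma_2$ is genuinely uncontrolled: its $l_\Gamma$-length is not bounded by any function of $d_\Gamma(\gamma_1,\gamma_2)$. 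Crucially, hyperbolicity of $G$ cannot repair this, because the drift lives in $H$ and depends on the extension, not on the geometry of $G$. A concrete counterexample to your proposed mechanism: in $1\to\Z^{(\Z)}\to\Z\wr\Z\to\Z\to 1$ the quotient $\Z$ is $0$-hyperbolic (a tree, so every ``thin quadrilateral'' argument is vacuously available), the natural section $n\mapsto(0,n)$ satisfies $l_\Gamma(s(g))=l_G(g)$, and yet for $\gamma_1=(N\delta_0,n)$, $\gamma_2=(N\delta_0,n+1)$ one has $d_\Gamma(\gamma_1,\gamma_2)=1$ while $d_H(h(\gamma_1),h(\gamma_2))\sim 2N$ is unbounded. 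So condition (1) of Proposition~\ref{prop:unifembedintro} fails for your $\xi_n^\Gamma$, and no choice of section or appeal to thin triangles fixes it (bounded cocycles exist only in the special central-abelian case treated separately in Theorem~\ref{th:hyperbolicspecialcase}).

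The missing idea is \emph{localization via controlled supports} rather than a global section: what is needed from $G$ is not a uniform embedding (so Bonk--Schramm-type tree embeddings are the wrong ingredient) but a quantitative property A family $g_n:G\to l^2(G)$ with $\mathrm{supp}(g_n(x))\subset B(x,S_n^G)$, which the paper extracts from Tu's geodesics-to-a-boundary-point construction (Lemma~\ref{lm:directAhyp}, $S_n^G=n^{2+6p}$). One then forms, as in Dadarlat--Guentner's Theorem 4.1, a map $f_n:\Gamma\to l^2(G,\h)$ of the shape $f_n(a)=\sum_{x}g_n(\pi(a))(x)\,\delta_x\otimes h_n(\cdot)$, so that $H$-values are only ever compared \emph{over the same base point} $x$; the section defect then enters only through elements of $G$ of length at most $S_n^G$, i.e.\ as an additive $2S_n^G$ in the radius on which $h_n$ must have small variation. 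This is exactly why the paper applies Corollary~\ref{cor:explicit} with closeness radius $2S_n^G+\sqrt n$ and obtains $S_n^H=(n^{1/2+3p}S_n^G)^{1/(\delta-p)}=n^{(5/2+9p)/(\delta-p)}$, which (together with the quasi-geodesic relaxation of Remark~\ref{remark:quageo}) yields the exponent $\delta/5$. Without the support condition and the fiberwise patching, the quantitative bookkeeping you describe has nothing to attach to.
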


We assume that every metric space in this article is a group and we assume that the metric is induced by a length function.

\section{Hilbert space compression for the limit of a directed system of groups \label{section:limit}}
Throughout this article, every metric space will be a group whose metric is induced by a length function. Let us start by recalling the definition of a length function on a group.
\begin{definition}
A length function $l$ on a group $G$ is a function $l:G\rightarrow \R^+$ satisfying
\begin{enumerate}
\item $l(x)=0 \Leftrightarrow x=1$,
\item $\forall x\in G, \ l(x)=l(x^{-1})$,
\item $\forall x,y\in G, \ l(xy)\leq l(x)+l(y)$.
\end{enumerate}
We say that $l$ is proper, whenever
  \[ \forall M \in \R^+: \ \mid \{ g\in G \mid l(g)\leq M \} \mid < \infty . \]
Every length function on $G$ induces a left-invariant metric on $G$ by $d(x,y)=l(x^{-1}y) \ \forall x,y\in G$.
\label{def:lengthfunction}
\end{definition}
Let $G_1\rightarrow G_2 \rightarrow G_3 \rightarrow \ldots$ be a directed system of groups such that the maps $G_i\rightarrow G_{i+1}$ are isometric injections. Denote $G$ the direct limit of this system. By 
definition, $G$ can be seen as the disjoint union of all the $G_i$ divided by some equivalence relation. Define the induced length function $l$ on $G$ by $l(x):= \lim_i l_{G_i}(x)$. We proceed under the assumption that $l$ is a proper length function on $G$. In this section, we ask ourselves the question how the Hilbert space compression of $G$, denoted by $\alpha(G)$, is related to the Hilbert space compressions of the $G_i$.\\

To begin, notice that every $G_i$ can be seen as a metric subspace of $G$ and so $\alpha(G)\leq \inf_{i\in \N}\alpha(G_i)$. Clearly, this bound is sharp, since as a family of subgroups we can take $G_i=G \ (\forall i\in \N)$. It proves more challenging to find a good lower bound for $\alpha(G)$. First, note that the same bound as above, i.e. $\inf_{i\in I}\alpha(G_i)$, is not always a lower bound. As an example, equip the group
\[ \Z^{(\Z)}=\{ f:\Z \rightarrow \Z \mbox{ with finite support} \}=\{(f,a) \in \Z\wr \Z \mid a=0 \} \]
with the induced metric from $\Z \wr \Z$. This group is the direct limit of the family of subgroups $G_n:=\Z^{2n+1}=\{ f:[-n,n]\rightarrow \Z\}$, where each $\Z^{2n+1}$ is equipped with the subspace metric from $\Z^{(\Z)}$. Since this metric is quasi-isometric to the standard word length metric on $\Z^{2n+1}$, we obtain $\Z^{(\Z)}$ as a limit of groups with compression $1$. However, it follows from the proof of Theorem $3.9$ in \cite{Arzhantseva} that $\Z^{(\Z)}$ has compression less than $\frac{3}{4}$.

Notice moreover that $\Z$ and $\Z^{(\Z)}$ have different compressions although they are both limits of groups of compression $1$. It will thus be necessary to include more information on how the groups $G_i$ are embedded in their respective Hilbert spaces in order to say something useful about the Hilbert space compression of their limit.

We propose the following
{\theorem \label{theorem:limit}
Assume that $G$ is the direct limit of a directed metric system $(G_i)_{i\in \N}$ of groups and that the induced length function $l$ is proper.
If $\inf_{i \in \N} (\alpha(G_i))=0$, then $\alpha(G)=0$.\\
Else, choose $0<\delta < \inf_{i\in \N}\alpha(G_i)$ and choose for every $i\in \N$, a Hilbert space $\h_i$, constants $C_i>0, \widetilde{C_i}, D_i,\widetilde{D_i}\geq 0$ and a map
\[ f_i:G_i \rightarrow \h_i \]
satisfying
\[ (1/C_i)\ d(x,y)^{\delta} -D_i \leq d(f_i(x),f_i(y)) \leq \widetilde{C_i}\ d(x,y) +\widetilde{D_i}  \ \forall x,y\in G_i. \]
Denote $g:\N \rightarrow \N$ such that for all $x\in G$ we have $x\in G_{g(n)}$ whenever $l(x)\leq \sqrt{n}$.
Then,
\[ \alpha(G)\geq \limsup_{n\rightarrow \infty}\frac{(\delta/2) \ln(n-1)}{\ln(C_{g(n)}\sqrt{2\ln(2)n} (\widetilde{C_{g(n)}} \ \sqrt{n} + \widetilde{D_{g(n)}}) +C_{g(n)} D_{g(n)})} .\]}


\begin{ex}
Assume that $G$ is an infinite direct sum of finite groups $G=F_0\oplus F_1\oplus F_2 \oplus \ldots$ where $F_0=\{1\}$. We can equip $G$ with a proper length function by setting $l(g)=\min\{n\in \N \mid g\in \oplus_{i=0}^n F_i\}$. Clearly, then $G$ is a directed system of metric spaces as above. Recalling the fact that finite groups have Hilbert space compression equal to $1$, we can apply Theorem \ref{theorem:limit}, obtaining $\alpha(G)=1$.
\end{ex}
\begin{ex}
Let $G\wr H$ be finitely generated and equip it with the word length metric relative to a finite symmetric generating subset.
Theorem \ref{theorem:limit} can be used as an easy way to estimate the compressions of spaces $G^{(H)}:=\{ f: H \rightarrow G \mid f$ has finite support $\}$, equipped with the induced length function from $G\wr H$. If $G$ is a discrete group with compression $\alpha$ and $H$ has polynomial growth of order $d$, then we obtain the lower bound $\frac{2\alpha}{d+4}$. It must be mentioned that the so obtained lower bound is weaker than the lower bound obtained in \cite{naoper}.
\end{ex}

Proposition \ref{prop:unifembedintro} from the Introduction plays a very important role in our proofs. It is implied by the following Proposition, which is Proposition $2.1$ of \cite{guedad}. We will give a (slightly modified version of) Guentner and Dadarlat's proof here because the details will be of vital importance further on.

{\proposition Let $X$ be a metric space. Then $X$ is uniformly embeddable in a Hilbert space if and only if for every $R>0$ and $\epsilon>0$ there exists a Hilbert space valued map $\xi:X\rightarrow \h, x\to \xi_x$ such that $\parallel \xi_x \parallel=1$ for all $x\in X$ and such that
\begin{enumerate}
\item $\sup \{\parallel \xi_x - \xi_{x'} \parallel: d(x,x') \leq R, x,x'\in X \} \leq \epsilon$,
\item $ \lim_{S\to \infty} \inf \{ \parallel \xi_x - \xi_{x'} \parallel : d(x,x')\geq S, x,x'\in X \} = \sqrt{2} $.
\end{enumerate} \label{prop:unifembed}}
\begin{proof} 
Assume that $X$ is uniformly embeddable and let $F:X\rightarrow \h$ be a uniform embedding of $X$ in a real Hilbert space $\h$. Let $\rho_-$ and $\rho_+$ be functions such that
\[ \rho_-(d(x,y)) \leq \parallel F(x)- F(y) \parallel \leq \rho_+(d(x,y)) .\]
Denote
\[ \Exp(\h)= \R \oplus \h \oplus (\h \otimes \h) \oplus (\h \otimes \h \otimes \h) \oplus \cdots \]
and define $\Exp:\h \rightarrow \Exp(\h)$ by
\[ \Exp(\zeta)=1 \oplus \zeta \oplus ( \frac{1}{\sqrt{2!}} \zeta \otimes \zeta ) \oplus (\frac{1}{\sqrt{3!}} \zeta \otimes \zeta \otimes \zeta ) \oplus \cdots .\]
Note that $\langle \Exp(\zeta),\Exp(\zeta') \rangle = e^{\langle \zeta,\zeta'\rangle}$, for all $\zeta,\zeta'\in \h$. For $t>0$ define 
\[ \xi_x=e^{-t \parallel F(x) \parallel^2} \Exp(\sqrt{2t} F(x)). \]
It is easily verified that $\langle \xi_x,\xi_x' \rangle = e^{-t \parallel F(x)- F(x') \parallel^2}$. Consequently, for all $x,x'\in X$ we have $\parallel \xi_x \parallel =1$, and
\begin{equation}
e^{-t \rho_+(d(x,x'))^2} \leq \langle \xi_x, \xi_{x'} \rangle \leq e^{-t \rho_-(d(x,x'))^2} .
\end{equation}
Putting $t=\frac{-\ln(1-\epsilon^2/2)}{\rho_+(R)^2}$,
it is easy to verify conditions $1$ and $2$ above.\\

Conversely, choose $p>0$ and assume that $X$ satisfies the conditions in the statement. There exist a sequence of maps $\eta_n:X\rightarrow \h_n$ and a sequence of numbers $S_0=0<S_1 <S_2 < \ldots$, increasing to infinity, such that for every $n\geq 1$ and every $x,x'\in X$,
\begin{enumerate}
\item $\parallel \eta_n(x) \parallel =1$
\item $\parallel \eta_n(x)- \eta_n(x') \parallel \leq \frac{1}{n^{1/2+p}}$, provided $d(x,x')\leq \sqrt{n}$,
\item $\parallel \eta_n(x)-\eta_n(x') \parallel \geq 1$, provided $d(x,x')\geq S_n$.
\end{enumerate}
Choose a base point $x_0 \in X$ and define $F:X\rightarrow \bigoplus_{n=1}^{\infty} \h_n$ by
\[ F(x)=\frac{1}{2} ((\eta_1(x)-\eta_1(x_0)) \oplus (\eta_2(x)-\eta_2(x_0)) \oplus \cdots ). \]
It is not hard to verify that $F$ is well defined and
\[ \rho_-(d(x,x'))\leq \parallel F(x)-F(x') \parallel \leq d(x,x')+C , \mbox{ for all } x,x'\in X ,\]
where $C>0$ is some constant, $\rho_-=\frac{1}{2} \sum_{n=1}^{\infty} \sqrt{n-1} \chi_{[S_{n-1},S_n)}$, and the $\chi_{[S_{n-1},S_n)}$ are the characteristic functions of the sets $[S_{n-1},S_n)$.

Indeed, let $x,x'\in X$. If $n$ is such that $\sqrt{n-1}\leq d(x,x')<\sqrt{n}$, we have
\begin{eqnarray*}
\parallel F(x)- F(x') \parallel ^2 &=& \frac{1}{4} \sum_{i\leq n-1} \parallel \eta_i(x) -\eta_i(x') \parallel^2 + \frac{1}{4} \sum_{i\geq n} \parallel \eta_i(x) -\eta_i(x') \parallel^2 \\
& \leq & (n-1) + \frac{1}{4} \sum_{i\geq n} \frac{1}{i^{1+2p}} \leq d(x,x')^2 + C
\end{eqnarray*}
where $C=\frac{1}{4} \sum_{i\geq n} \frac{1}{i^{1+2p}} < \infty$.

Similarly, if $n$ is such that $S_{n-1}\leq d(x,x') <S_n$, we have
\[ \parallel F(x)- F(x') \parallel ^2 \geq \frac{1}{4} \sum_{i\leq n-1} \parallel \eta_i(x)-\eta_i(x') \parallel^2 \geq \frac{n-1}{4}=\rho_-(d(x,x'))^2.\]

\end{proof}
The following corollary gives a connection between the compression of $X$ and the growth of the $(S_n)_{n\in \N_0}$.

\begin{corollary}
Assume that $X$ is a metric space with compression $\delta>0$. Choose $0<p<\delta$ and for all $n\in \N_0$, let
$R_n=n^r, \epsilon_n=\frac{1}{a n^b}$ for some $a,b,r \in \R_+$. For every $n$ large enough, we can find a collection of unit vectors $(\xi_n^x)_{x\in X}$ in some Hilbert space $\h_p$ satisfying
\begin{enumerate}
\item
$\parallel \xi_n^x- \xi_n^y \parallel \leq \epsilon_n$ provided $d(x,y)\leq R_n$,
\item
$\parallel \xi_n^x- \xi_n^y \parallel \geq 1$ provided $d(x,y)\geq n^{\frac{r+b+p}{\delta-p}}.$
\end{enumerate}
\label{cor:explicit}
\end{corollary}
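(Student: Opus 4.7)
The plan is to exploit the forward direction of Proposition~\ref{prop:unifembed}'s proof, but with the parameter $t$ chosen to depend on $n$ so that the resulting family of maps is quantitatively controlled. First, since $\alpha(X)=\delta>0$ and $p\in(0,\delta)$, the definition of Hilbert space compression produces a large-scale Lipschitz uniform embedding $F:X\to \h$ together with constants $C',D'>0$ and $C'',D''\geq 0$ such that
\[
\tfrac{1}{C'}\, d(x,y)^{\delta-p} - D' \;\leq\; \|F(x)-F(y)\| \;\leq\; C''\, d(x,y)+D''
\qquad(\forall x,y\in X).
\]
(Strictly speaking, the supremum defining $\alpha(X)$ only yields an exponent arbitrarily close to $\delta$, but any exponent $>\delta-p$ implies the above after enlarging~$C'$.)

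Next, I would mimic the exponential construction from the proof of Proposition~\ref{prop:unifembed}: set $\h_p := \Exp(\h)$ and
\[
\xi_n^x := e^{-t_n\|F(x)\|^2}\,\Exp\bigl(\sqrt{2t_n}\,F(x)\bigr),
\]
so that $\|\xi_n^x\|=1$ and $\|\xi_n^x-\xi_n^y\|^2 = 2 - 2e^{-t_n\|F(x)-F(y)\|^2}$ for every $x,y$ and every $n$. The entire argument then reduces to choosing the scalars $t_n>0$ so that both (1) and (2) hold.

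To arrange (1): whenever $d(x,y)\le R_n=n^r$, the Lipschitz upper bound gives $\|F(x)-F(y)\|\le C''n^r+D''$, and the inequality $\|\xi_n^x-\xi_n^y\|\le\epsilon_n$ is equivalent to $t_n\|F(x)-F(y)\|^2 \le -\ln(1-\epsilon_n^2/2)$. This forces the choice
\[
t_n := \frac{-\ln(1-\epsilon_n^2/2)}{(C''n^r+D'')^2},
\]
and since $\epsilon_n^2/2\to 0$ the numerator is equivalent to $\tfrac{1}{2a^2 n^{2b}}$, so $t_n$ is asymptotically of order $n^{-2(r+b)}$.

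The main check is then condition~(2), and this is where the exponent in the statement earns its form. When $d(x,y)\ge n^{(r+b+p)/(\delta-p)}$, raising to the power $\delta-p$ yields $d(x,y)^{\delta-p}\ge n^{r+b+p}$, so for $n$ sufficiently large the lower bound on $F$ produces $\|F(x)-F(y)\|\ge c\, n^{r+b+p}$ for some $c>0$. Plugging into $t_n\|F(x)-F(y)\|^2$, the exponent in $n$ works out to $2(r+b+p)-2(r+b)=2p>0$, so this product diverges and eventually exceeds $\ln 2$, forcing $e^{-t_n\|F(x)-F(y)\|^2}\le 1/2$ and hence $\|\xi_n^x-\xi_n^y\|\ge 1$. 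The only delicate point is this exponent balance: the scale $n^{(r+b+p)/(\delta-p)}$ is precisely what one needs in order to pass the gap $t_n^{-1/2}\sim n^{r+b}$ through the H\"older-type lower bound $\rho_-(r)\sim r^{\delta-p}$, with the little bit of slack $2p$ absorbing the constants for all $n$ large.
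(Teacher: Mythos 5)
Your proposal is correct and follows essentially the same route as the paper: the same exponential construction $\xi_n^x=e^{-t_n\|F(x)\|^2}\Exp(\sqrt{2t_n}F(x))$ with the same choice $t_n=\frac{-\ln(1-\epsilon_n^2/2)}{\rho_+(R_n)^2}$, the only cosmetic difference being that the paper solves the inequality for the exact threshold distance $A_n$ and then checks $A_n\le n^{(r+b+p)/(\delta-p)}$ for large $n$, whereas you substitute the claimed threshold directly and verify that $t_n\|F(x)-F(y)\|^2\ge \ln 2$ eventually. Both computations are the same exponent bookkeeping, so no substantive difference.
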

\begin{proof}
Let $F:X\rightarrow \h$ be a uniform embedding of $X$ into a Hilbert space satisfying
\[ \forall x,y\in X: \ \frac{1}{C} d(x,y)^{\delta-p} -D \leq d(F(x),F(y)) \leq \widetilde{C} d(x,y) + \widetilde{D},\]
for some $C,\widetilde{C} >0$ and $D,\widetilde{D} \geq 0$.
Denote $\rho_-(d(x,y)):= \frac{1}{C} d(x,y)^{\delta-p} -D $ and $ \rho_+(d(x,y)):=\widetilde{C} d(x,y) + \widetilde{D}$. 
In the proof of Proposition \ref{prop:unifembed}, set $t_n=\frac{-\ln(1-\frac{\epsilon_n^2}{2})}{\rho_+(R_n)^2}$ and obtain vectors $(\xi_n^x)_{x\in X}$ such that
\[ e^{-t_n \rho_+(d(x,x'))^2} \leq \langle \xi_n^x, \xi_n^{x'} \rangle \leq e^{-t_n \rho_-(d(x,x'))^2} .\]
It is easy to verify that the vectors $(\xi_n^x)_{x\in X}$ satisfy Condition $(1)$ of this Corollary. Regarding the second condition, note that
\begin{eqnarray*}
\parallel \xi_n^x - \xi_n^{y} \parallel ^2 &=& 2-2\langle \xi_n^x, \xi_n^{y} \rangle \\
&\geq & 2-2e^{-t_n \rho_-(d(x,y))^2} \\
&=& 2-2e^{\frac{\ln(1-\frac{\epsilon_n^2}{2})}{(\widetilde{C}\ R_n +\widetilde{D})^2}((1/C)\ d(x,y)^{\delta-p} -D)^2}.
\end{eqnarray*}
Consequently we have $\parallel \xi_n^x - \xi_n^y \parallel\geq 1$ whenever
\[ 2-2e^{\frac{\ln(1-\frac{\epsilon_n^2}{2})}{(\widetilde{C}\ R_n +\widetilde{D})^2}((1/C)\ d(x,y)^{\delta-p} -D)^2} \geq 1, \]
i.e. whenever
\[ (1-\frac{\epsilon_n^2}{2})^{\frac{((1/C) d(x,y)^{\delta-p} -D)^2}{(\widetilde{C} R_n + \widetilde{D})^2}} \leq \frac{1}{2} . \]
This is true if and only if
\[ \frac{(1/C) d(x,y)^{\delta-p} - D}{\widetilde{C} R_n + \widetilde{D}}\geq \sqrt{\frac{-\ln(2)}{\ln(1-\frac{\epsilon_n^2}{2})}}, \]
if and only if
\[ d(x,y) \geq [C (\sqrt{\frac{-\ln(2)}{\ln(1-\frac{1}{2a^2n^{2b}})}} (\widetilde{C} n^r + \widetilde{D})+D)]^{\frac{1}{\delta-p}} .\]

Since $\frac{-\ln(2)}{\ln(1-\frac{1}{2a^2n^{2b}})} \leq \ln(2) 2 a^2n^{2b} \leq 2a^2n^{2b}$, it suffices to take
\[ d(x,y)\geq [C \sqrt{2} an^b (\widetilde{C} n^r + \widetilde{D})+CD]^{\frac{1}{\delta-p}}:=A_n .\]
If $n$ is large enough, then $A_n\leq n^\frac{b+r+p}{\delta-p}$, so we obtain 
$\parallel \xi_n^x- \xi_n^y \parallel \geq 1$ provided $d(x,y)\geq n^{\frac{r+b+p}{\delta-p}}$.
\end{proof} 
\begin{remark}
In the second part of the proof of Proposition \ref{prop:unifembed}, we need the condition
\begin{equation}
\parallel \eta_n(x)- \eta_n(x') \parallel \leq \frac{1}{n^{1/2+p}}\mbox{, provided } d(x,x')\leq \sqrt{n},
\label{eq:conditiequageo}
\end{equation}
to prove that $F$ is Lipschitz. Assume now that $X$ is a finitely generated group which is equipped with the word length metric relative to some finite symmetric generating subset. Then $X$ is a geodesic metric space implying that any function $\rho_+$ satisfying $\forall x,y \in X: \parallel F(x)-F(y) \parallel \leq \rho_+(d(x,y))$, can assumed to be of the form $Cd(x,y)+D$ for some constants $C,D\geq 0$. Therefore, in order to prove that $F$ is Lipschitz, we can relax condition (\ref{eq:conditiequageo}) to
\[  \parallel \eta_n(x)- \eta_n(x') \parallel \leq \frac{1}{n^{1/2+p}}\mbox{, provided } d(x,x')\leq \ln(n). \]
The condition 
\[ \parallel \eta_n(x)-\eta_n(x') \parallel \geq 1 \mbox{, provided }d(x,x')\geq S_n, \]
then holds for smaller $S_n$ and we obtain that the function $\rho_-=\frac{1}{2} \sum_{n=1}^{\infty} \sqrt{n-1} \chi_{[S_{n-1},S_n)}$ becomes larger. This will help us to get better compression estimates later.
\label{remark:quageo}
\end{remark}

{\bf Proof of Theorem \ref{theorem:limit}}
Choose $n\in \N_0, p>0$ and denote $R=\sqrt{n},\epsilon=\frac{1}{n^{1/2+p}}$. Next, take $g(n)\in \N$ such that $x\in G_{g(n)}$ whenever $l_G(x)\leq R=\sqrt{n}$. Set $t=\frac{-\ln(1-\epsilon^2/2)}{(\widetilde{C_{g(n)}}\ R +\widetilde{D_{g(n)}})^2}$ and take vectors $(\xi_x)_{x\in G_{g(n)}}$ as in the proof of proposition \ref{prop:unifembed}, i.e. such that for all $x,y\in G_{g(n)}:$
\[ e^{-t (\widetilde{C_{g(n)}}\ d_{g(n)}(x,y) +\widetilde{D_{g(n)}})^2} \leq \langle \xi_x, \xi_{y} \rangle \leq e^{-t ((1/C_{g(n)})\ d_{g(n)}(x,y)^{\delta} -D_{g(n)})^2}. \]
From the lower bound on $\langle \xi_x, \xi_{y} \rangle$, one derives
\[ \parallel \xi_x - \xi _y \parallel \leq \epsilon \mbox{ whenever } d_{g(n)}(x,y)\leq R. \]
Calculating as in Corollary $2.6$, we derive that
$\parallel \xi_x - \xi_y \parallel \geq 1$ whenever $d_{g(n)}(x,y)\geq S_n:=[C_{g(n)} (\sqrt{\frac{-\ln(2)}{\ln(1-\frac{1}{2n^{2p+1}})}} (\widetilde{C_{g(n)}} \sqrt{n} + \widetilde{D_{g(n)}})+D_{g(n)})]^{\frac{1}{\delta}}$.

In the proof of Proposition $3.1$ of \cite{guedad}, Dadarlat and Guentner explain how the family $(\xi_x)_{x\in G_{g(n)}}$ can be extended to a family of unit vectors $(\hat{\xi}_x)_{x\in G}$ in a larger Hilbert space, but still satisfying similar inequalities. More precisely, we obtain unit vectors $(\hat{\xi}_x)_{x\in G}$ in a Hilbert space satisfying
\begin{enumerate}
\item $\parallel \xi_x - \xi _y \parallel \leq \frac{1}{n^{1/2+p}} \mbox{ whenever } d(x,y)\leq \sqrt{n}; $
\item $\parallel \xi_x - \xi _y \parallel \geq 1 \mbox{ whenever }d(x,y)\geq S_n. $
\end{enumerate}
From the proof of proposition \ref{prop:unifembed}, we derive the existence of a large-scale uniform embedding of $G$ into a Hilbert space whose compression function $\rho_-$, is greater than $\frac{1}{2} \sum_{n=1}^{\infty} \sqrt{n-1} \chi_{[S_{n-1},S_n)}(t)$. Choose some $\beta\in [0,1]$, and define $\gamma: \R^+ \rightarrow \R^+, t\mapsto t^{\beta}$. If $\gamma$ eventually lies under some multiple of $\rho_-$, then the compression of $G$ is greater than $\beta$. There exists $T, \overline{C} \in \R^+$ such that $\gamma(t)\leq \overline{C} \rho_-(t), \ \forall t\geq T$ if 
\[ \limsup_{n\rightarrow \infty}\frac{S_n^{\beta}}{\sqrt{n-1}} < \infty,\]
if and only if
\[\beta \leq \limsup_{n\rightarrow \infty}\{\frac{ (\delta/2) \ln(n-1)}{\ln(C_{g(n)} (\sqrt{\frac{-\ln(2)}{\ln(1-\frac{1}{2n^{2p+1}})}} (\widetilde{C_{g(n)}} \sqrt{n} + \widetilde{D_{g(n)}})+D_{g(n)}))} \} .\]
Recalling that $\lim_{n\to \infty} [(\frac{-\ln(2)}{\ln(1-\frac{1}{2n^{2p+1}})}) / (2\ln(2)n^{2p+1})] =1 $ and that we can let $p$ go to $0$ since it is just a positive real number that we've chosen, we get the desired lower bound
\[  \limsup_{n\rightarrow \infty} \frac{(\delta/2) \ln(n-1)}{\ln(C_{g(n)} \sqrt{2\ln(2) n}(\widetilde{C_{g(n)}} \sqrt{n} + \widetilde{D_{g(n)}})+C_{g(n)} D_{g(n)})} , \]
for the Hilbert space compression of $G$. $\Box$ \\

\begin{remark}
If $G$ happens to be a quasi-geodesic space, then we can use Remark \ref{remark:quageo} to improve our result. Using the same notations as in Theorem \ref{theorem:limit} and assuming that $G$ is a quasi-geodesic space, we obtain the following.
\begin{quote}  Denote $g:\N \rightarrow \N$ a function such that for all $x\in G$ we have $x\in G_{g(n)}$ whenever $l(x)\leq \ln(n)$.
Then,
 \[ \alpha(G)\geq \limsup_{n\rightarrow \infty}\frac{ (\delta/2) \ln(n-1)}{\ln(C_{g(n)}\sqrt{2\ln(2)n} (\widetilde{C_{g(n)}} \ \ln(n)+ \widetilde{D_{g(n)}}) +C_{g(n)} D_{g(n)})}.\]
\end{quote}
\end{remark}

\begin{remark}
All of the above easily generalizes to directed systems of groups $(G_i)_{i\in I}$ where $I$ is {\em any} directed set.
\end{remark}

\section{Hilbert space compression for group extensions \label{section:extensions}}
In this paragraph, $\Gamma$ will denote a group whose metric is induced by a length function $l_{\Gamma}$ and $H$ will denote a normal subgroup of $\Gamma$ which has strictly positive Hilbert space compression when equipped with the induced metric $l_H:=(l_{\Gamma})_{\mid H}$. We assume that $\Gamma$ is a group extension
\[ 1 \rightarrow H \rightarrow \Gamma \stackrel{\pi}{\rightarrow} G \rightarrow 1 ,\]
and we equip $G$ with the induced length from $\Gamma$, i.e. for all $x\in \Gamma: l_G(\pi(x)):=\inf\{ l_\Gamma(y) \mid y\in \Gamma, \pi(y)=\pi(x) \}.$ Given certain conditions on $G$, we shall give bounds on the Hilbert space compression of $\Gamma$.\subsection{Extensions by a group of polynomial growth}
Let us begin by recalling the definition of a metric space with polynomial growth.
\begin{definition}
A metric space $X$ has polynomial growth if there exists a polynomial $P$ such that $\mid \overline{B(x,R)} \mid \leq P(R)$ for every $x\in X$ and every $R\geq 0$. Here $\overline{B(x,R)}$ is the closed ball with radius $R$ and center $x$.
\end{definition}
Notice that $G$ can have polynomial growth only if $l_G$ is proper.
In Lemma $6.6$ of \cite{Tu}, Tu proves that groups of polynomial growth have property A. We obtain the following lemma by quantifying his proof.
\begin{lm}
Let $G$ be a group, equipped with a proper length function, that has polynomial growth. Let $p\in ]0,1[$ be any real number. There exists $n_0\in \N$ such that for every natural number $n\geq n_0$, there exists a collection of unit vectors $(g_n(x))_{x\in G}$ in $l^2(G)$ such that 
$\parallel g_n(x) \parallel_2=1  ,\ \forall x \in G$ and
\begin{enumerate}
\item $\mid 1- \langle g_n(x),g_n(y) \rangle \mid \leq \frac{1}{4n^{1+2p}}$ provided $d_G(x,y)\leq \sqrt{n}$,
\item {\em supp}$(g_n(x))\subset B(x,S_n^G)$ for all $x\in G$ where $S_n^G=n^{3/2+5p}.$
\end{enumerate}
\label{lm:directAgrowth}
\end{lm}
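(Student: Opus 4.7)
My plan is to follow Tu's Property~A construction for polynomial growth groups, using normalised indicator functions of balls, and to keep careful track of the quantitative parameters. Set $V(R):=|B(e,R)|$ and, for $n$ large, define
\[ g_n(x) := V(S_n^G)^{-1/2}\,\chi_{B(x,\,S_n^G)}\in l^2(G),\qquad S_n^G:=n^{3/2+5p}. \]
Properness and left-invariance of $l_G$ give $|B(x,R)|=V(R)<\infty$ for every $x$, so each $g_n(x)$ is a genuine unit vector in $l^2(G)$ and the support condition~(2) of the lemma is immediate.

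For condition~(1), left-invariance yields
\[ \langle g_n(x),g_n(y)\rangle \;=\; \frac{|B(x,S_n^G)\cap B(y,S_n^G)|}{V(S_n^G)}. \]
The triangle inequality shows $B(x,S_n^G-\sqrt n)\subset B(x,S_n^G)\cap B(y,S_n^G)$ whenever $d_G(x,y)\leq\sqrt n$, so
\[ 1-\langle g_n(x),g_n(y)\rangle \;\leq\; \frac{V(S_n^G)-V(S_n^G-\sqrt n)}{V(S_n^G)}, \]
and because the $g_n(x)$ are pointwise nonnegative, $\langle g_n(x),g_n(y)\rangle\in[0,1]$, so this upper bound also controls the absolute value appearing in~(1).

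The main obstacle is estimating this annular ratio by $\tfrac{1}{4n^{1+2p}}$. I would invoke Gromov's polynomial growth theorem together with the Bass--Guivarc'h formula to conclude that $V(R)=(c+o(1))R^d$ for some integer $d\geq 0$ and constant $c>0$; from this one extracts, for $R$ large and $r\leq R/2$, an estimate
\[ V(R)-V(R-r)\;\leq\; C\,\frac{r}{R}\,V(R), \]
with $C$ depending only on $d$. Plugging in $R=S_n^G=n^{3/2+5p}$ and $r=\sqrt n$ yields the upper bound $C\,n^{-1-5p}$, which is at most $\tfrac14\,n^{-1-2p}$ as soon as $n^{3p}\geq 4C$; choosing $n_0$ to enforce this inequality (and the large-$R$ hypothesis) completes the argument.

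The deepest ingredient here is really the two-sided polynomial volume bound furnished by Gromov's theorem; the rest is bookkeeping. If one wanted to avoid Gromov altogether, a viable alternative would be a pigeonhole/averaging argument over radii in $[S_n^G/2,S_n^G]$ that produces a single scale $R_n$ at which $V(R_n)/V(R_n-\sqrt n)$ is close to~$1$, at the cost of slightly less tidy control on the support radius. The exponent $3/2+5p$ is chosen precisely so that the buffer $5p-2p=3p$ absorbs the dimensional constant $C(d)$ for all sufficiently large $n$.
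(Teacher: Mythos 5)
There is a genuine gap at the crucial step. Your main line of argument fixes the radius $R=S_n^G=n^{3/2+5p}$ once and for all and then needs the annulus estimate $V(R)-V(R-\sqrt n)\leq C\,\frac{\sqrt n}{R}\,V(R)$. This does not follow from the asymptotics $V(R)=(c+o(1))R^d$: an $o(1)$ \emph{relative} error in $V$ contributes an error of size $o(1)\cdot V(R)$ to the annulus volume, whereas you need the annulus to be smaller than $n^{-1-2p}V(R)$, which is quantitatively far below any $o(V(R))$ bound. Even the sharpest known error terms in the Bass--Guivarc'h asymptotics are nowhere near strong enough to control an annulus of width $\sqrt n$ at radius $n^{3/2+5p}$. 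Moreover, Gromov's theorem and Bass--Guivarc'h are not available in the generality of the lemma: $G$ here carries an arbitrary proper length function with only a polynomial \emph{upper} bound on ball volumes (in the application $G$ is a quotient of $\Gamma$ with the induced, generally non-word, metric), so there is no reason for $V(R)\sim cR^d$ to hold at all, nor is $G$ assumed finitely generated.

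The fix is precisely the ``alternative'' you mention in passing and then set aside: choose the radius adaptively. The paper defines $k(n)$ as the least $r$ with $|B_{r+\sqrt n}|/|B_{r-\sqrt n}|\leq 1+\frac{1}{2n^{1+2p}}$ and shows by a pigeonhole iteration that such an $r$ exists and satisfies $k(n)\leq 2n^{3/2+4p}$ for large $n$ --- otherwise iterating the failure of the ratio bound across $\sim n^{1+4p}$ disjoint annuli inside $[0,2n^{3/2+4p}]$ would force $|B_{2n^{3/2+4p}+\sqrt n}|\geq (1+\frac{1}{2n^{1+2p}})^{n^{1+3p}}|B_{\sqrt n}|$, contradicting the polynomial upper bound. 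One then takes $g_n(x)$ to be the normalised indicator of $B(x,k(n))$; the support lies in $B(x,n^{3/2+5p})$ for $n$ large, and your computation of $1-\langle g_n(x),g_n(y)\rangle$ via the annulus goes through verbatim at the radius $k(n)$. So your overall architecture (normalised ball indicators, left-invariance, nonnegativity giving $\langle g_n(x),g_n(y)\rangle\in[0,1]$) is sound, but the fixed-radius volume estimate you rely on is unjustified and, in this generality, false; the pigeonhole selection of the radius is not an optional tidying step but the heart of the proof.
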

\begin{proof}
 For each $x\in G$ and $r\in \R$, denote by $B(x,r)\subset G$ the ball of radius $r$ and center $x$. Denote the characteristic function of $B(x,r)$ by $\chi_x^{r}$. We shall denote $B(1,r)$ simply by $B_r$ and $\chi_1^r$ by $\chi_r$. For $n\in \N_0$, denote $R_n=\sqrt{n}$ and, with the convention that $\forall a\in \R: a/0=\infty$, let $k(n)$ be the infimum of all real numbers $r$
such that 
\[ \frac{ \mid B_{r+R_n} \mid }{ \mid B_{r-R_n} \mid } \leq 1 + \frac{1}{2n^{1+2p}} .\]
Clearly, such $k(n)$ exists, since if it didn't exist, then $\forall i\in \N_0$,
\[ \mid B_{2iR_n+R_n} \mid \geq \mid B_{(2i-1)R_n} \mid (1 + \frac{1}{2n^{1+2p}})  \geq \ldots \geq \mid B_{R_n} \mid (1 + \frac{1}{2n^{1+2p}})^{i}, \]
obtaining a contradiction since the left hand side depends polynomially on $i$ whereas the right hand side depends exponentially on $i$.
 
We claim that there exists $\overline{n}\in \N_0$ such that $\forall n\geq \overline{n}: k(n)\leq 2n^{3/2+4p}$. 
 Assume therefore, that such $\overline{n}$ does not exist. Then there exists a strictly monotone increasing sequence $(n_i)_{i\in \N}$ such that
\[ \forall i: \frac{\mid B_{2n_i^{3/2+4p}+R_{n_i}} \mid }{\mid B_{2n_i^{3/2+4p}-R_{n_i}} \mid } \geq 1 + \frac{1}{2n_i^{1+2p}} .\]
Denoting the integer part of a real number $a$ by $[a]$ and assuming for the last inequality below that $\forall i: n_i \geq 2^{1/p}$, we obtain that
\begin{eqnarray*}
\mid B_{2n_i^{3/2+4p}+R_{n_i}} \mid & \geq & (1 + \frac{1}{2n_i^{1+2p}} )\mid B_{2n_i^{3/2+4p}-2R_{n_i} + R_{n_i}} \mid \\
 & \geq  &  (1 + \frac{1}{2n_i^{1+2p}} )^2 \mid B_{2n_i^{3/2+4p}-4R_{n_i} + R_{n_i}} \mid  \\
& \geq & \ldots  \\
& \geq & (1 + \frac{1}{2n_i^{1+2p}} )^{[n_i^{1+4p}]} \mid B_{R_{n_i}}  \mid \\
& \geq & (1 + \frac{1}{2n_i^{1+2p}} )^{n_i^{1+3p}} \mid B_{R_{n_i}}  \mid .
\end{eqnarray*}
Since $\lim_{i \to \infty} (1+ \frac{1}{2n_i^{1+2p}})^{n_i^{1+2p}}= \exp(1/2)$, it is clear that the right hand side depends exponentially on $n_i$, whereas the left hand side depends polynomially on $n_i$. We obtain a contradiction.

Consider now the functions $\chi_x^{k(n)}$. They are elements of $l^1(G)$ such that $d_G(x,y)\leq \sqrt{n}=R_n$ implies 
 \[ \frac{\parallel \chi_x^{k(n)} - \chi_{y}^{k(n)} \parallel_1}{\parallel \chi_x^{k(n)} \parallel_1} \leq \frac{\mid B(x,k(n)+R_n) \mid - \mid B(x,k(n)-R_n) \mid }{\mid B(x,k(n)-R_n) \mid }= \frac{\mid B_{k(n)+R_n} \mid }{\mid B_{k(n)-R_n} \mid } -1 \leq \frac{1}{2n^{1+2p}} .\]
Moreover, the support of $\chi_x^{k(n)}$ lies inside $\overline{B(x,k(n))}\subset B(x,2k(n))\subset B(x,4n^{3/2+4p})\subset B(x,n^{3/2+5p})$ whenever $n$ is larger than some natural number $n_0$.
To conclude, take $n\geq \max(n_0,\overline{n})$ and define $g_n(x)= \sqrt{\frac{\chi_x^{k(n)}}{\parallel \chi_{k(n)} \parallel_1}}$. Clearly, these are elements of norm $1$ in $l^2(G)$ that satisfy condition $(2)$ of this lemma. To show that they also satisfy condition $(1)$, take $x,y$ such that $d_G(x,y)\leq R_n$. Then
\begin{eqnarray*}
\parallel g_n(x) - g_n(y) \parallel_2 ^2 &= & \sum_{z\in X} \mid g_n(x)(z) - g_n(y)(z) \mid ^2 \\
& \leq & \sum_{z\in X} (\mid g_n(x)(z) - g_n(y)(z) \mid \cdot \mid g_n(x)(z) + g_n(y)(z) \mid ) \\
& = & \sum_{z\in X} \mid g_n(x)(z)^2 - g_n(y)(z)^2 \mid \\
& = &\frac{\parallel \chi_x^{k(n)} - \chi_y^{k(n)} \parallel_1}{\parallel \chi_{k(n)} \parallel_1 } \leq \frac{1}{2n^{1+2p}}.
\end{eqnarray*}
Therefore $\mid 1- \langle g_n(x) , g_n(y) \rangle \mid \leq \frac{1}{4n^{1+2p}}$ as desired.
\end{proof}
\noindent
\begin{theorem}
 Assume that $\Gamma$ is a group, equipped with some length function $l=l_{\Gamma}$, that fits in a short exact sequence
\[ 1 \rightarrow H \rightarrow \Gamma \stackrel{\pi}{\rightarrow} G \rightarrow 1. \]
If $G$ with the induced metric from $\Gamma$ has polynomial growth and if $H$ with the induced metric from $\Gamma$ has strictly positive Hilbert space compression, then the Hilbert space compression of $\Gamma$ is strictly positive. More precisely, if the compression of $H$ equals $\delta$, then the compression of $\Gamma$ is at least $\delta/4$.
\label{th:uitbreiding}
\end{theorem}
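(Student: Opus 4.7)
The plan is to adapt the vector construction from the proof of Theorem \ref{theorem:limit}: for each sufficiently large $n$ I will build unit vectors $(\zeta_n^x)_{x\in\Gamma}$ in a Hilbert space satisfying the conditions of Proposition \ref{prop:unifembed}, with ``far'' threshold $S_n^\Gamma\lesssim n^{(2+7p)/(\delta-p)}$ for arbitrary $0<p<\delta$. Feeding these vectors into the proof of Proposition \ref{prop:unifembed} produces a large-scale Lipschitz uniform embedding of $\Gamma$ whose lower control function is at least $\tfrac12\sqrt{n-1}$ on $[S_{n-1}^\Gamma,S_n^\Gamma)$; the compression computation at the end of Theorem \ref{theorem:limit} then yields $\alpha(\Gamma)\ge (\delta-p)/(2(2+7p))$, which tends to $\delta/4$ as $p\to 0^+$.

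The ingredients are the unit vectors $g_n:G\to\ell^2(G)$ from Lemma \ref{lm:directAgrowth}, which are pointwise non-negative, supported in $G$-balls of radius $S_n^G=n^{3/2+5p}$ and $O(n^{-1/2-p})$-close for $d_G$-distance $\le\sqrt n$; and unit vectors $\xi_n:H\to\mathcal H_n$ obtained from Corollary \ref{cor:explicit} applied to $H$ with $R_n=4n^{3/2+5p}$ and $\epsilon_n=1/(4n^{1/2+p})$, satisfying $\|\xi_n^h-\xi_n^{h'}\|\le\epsilon_n$ for $d_H(h,h')\le R_n$ and $\|\xi_n^h-\xi_n^{h'}\|\ge 1$ for $d_H(h,h')\ge S_n^H\lesssim n^{(2+7p)/(\delta-p)}$. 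I fix a set-theoretic section $s:G\to\Gamma$ with $l_\Gamma(s(g))\le l_G(g)+1$ (possible since $l_G$ is an infimum), and for $x\in\Gamma$, $z\in G$ set
\[
h_z(x) := s(z)^{-1}\,x\,s(z^{-1}\pi(x))^{-1}, \qquad \zeta_n^x := \sum_{z\in G} g_n(\pi(x))(z)\, \xi_n^{h_z(x)}\otimes\delta_z.
\]
A direct computation gives $\pi(h_z(x))=1$ so $h_z(x)\in H$, and orthogonality of the $\delta_z$ combined with $\|\xi_n^{h_z(x)}\|=1$ and $\|g_n(\pi(x))\|_2=1$ yields $\|\zeta_n^x\|=1$.

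The key technical estimate is the two-sided bound
\[
\bigl|\,d_H(h_z(x),h_z(y)) - d_\Gamma(x,y)\,\bigr| \;\le\; d_G(z,\pi(x)) + d_G(z,\pi(y)) + 2,
\]
which follows from the identity $h_z(x)^{-1}h_z(y)=s(z^{-1}\pi(x))\cdot x^{-1}y\cdot s(z^{-1}\pi(y))^{-1}$, the triangle inequality applied both ways, and $l_\Gamma(s(g))\le l_G(g)+1=d_G(1,g)+1$. For closeness, suppose $d_\Gamma(x,y)\le\sqrt n$; if $z\in\mathrm{supp}\,g_n(\pi(x))$, then $d_G(z,\pi(y))\le S_n^G+\sqrt n$, so $d_H(h_z(x),h_z(y))\le 2S_n^G+2\sqrt n+2\le R_n$ for $n$ large, giving $\|\xi_n^{h_z(x)}-\xi_n^{h_z(y)}\|\le\epsilon_n$. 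Splitting $\zeta_n^x-\zeta_n^y$ into its ``$\xi$-varies'' and ``$g$-varies'' pieces via the triangle inequality, the first piece contributes $O(\epsilon_n)$ and the second $O(\|g_n(\pi(x))-g_n(\pi(y))\|_2)=O(n^{-1/2-p})$, so $\|\zeta_n^x-\zeta_n^y\|=O(n^{-1/2-p})$. For farness, set $S_n^\Gamma:=2S_n^G+S_n^H+2$ and assume $d_\Gamma(x,y)\ge S_n^\Gamma$. If $d_G(\pi(x),\pi(y))>2S_n^G$, the supports of $g_n(\pi(x))$ and $g_n(\pi(y))$ are disjoint and $\langle\zeta_n^x,\zeta_n^y\rangle=0$; otherwise, the estimate above gives $d_H(h_z(x),h_z(y))\ge d_\Gamma(x,y)-2S_n^G-2\ge S_n^H$ for every $z$ in the common support, hence $\langle\xi_n^{h_z(x)},\xi_n^{h_z(y)}\rangle\le\tfrac12$, and using non-negativity of the $g_n$ together with Cauchy--Schwarz, $\langle\zeta_n^x,\zeta_n^y\rangle\le\tfrac12$. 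In either case $\|\zeta_n^x-\zeta_n^y\|\ge 1$.

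The central difficulty is settling on the right definition of $h_z(x)$: it must land in $H$, it must reflect $d_\Gamma(x,y)$ up to an error controlled by $d_G(z,\pi(x))+d_G(z,\pi(y))$ (so that the error vanishes on the support of $g_n(\pi(x))$), and it must do so with a section satisfying only $l_\Gamma(s(g))\approx l_G(g)$. The cocycle-like expression $s(z)^{-1}x\,s(z^{-1}\pi(x))^{-1}$ is essentially forced by these demands; once it is in place the remaining bookkeeping is routine. In particular $S_n^\Gamma$ is dominated by $S_n^H\lesssim n^{(2+7p)/(\delta-p)}$ since the exponent $3/2+5p$ of $S_n^G$ is strictly smaller (using $\delta\le 1$), and the compression formula from Theorem \ref{theorem:limit} gives $\alpha(\Gamma)\ge\tfrac12\limsup_n\log(n-1)/\log S_n^\Gamma\ge(\delta-p)/(2(2+7p))\to\delta/4$ as $p\to 0^+$.
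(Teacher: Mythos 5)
Your proposal is correct and follows essentially the same route as the paper: it combines the vectors from Lemma \ref{lm:directAgrowth} on $G$ with those from Corollary \ref{cor:explicit} on $H$, and then runs the same degree count on $S_n^\Gamma$ via Proposition \ref{prop:unifembed} to reach $\delta/4$. The only difference is that you write out explicitly (via the section $s$ and the cocycle-type elements $h_z(x)$, with the two-sided estimate on $d_H(h_z(x),h_z(y))$) the construction of $f_n:\Gamma\rightarrow l^2(G,\h)$ that the paper simply imports from the proof of Theorem $4.1$ of \cite{guedad}, and your bookkeeping there is sound.
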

\begin{proof}
Denote the Hilbert space compression of $H$ by $\delta>0$ and choose $0<p<\delta$.
If $n$ is large enough, then $n^{3/2+6p}\geq 2S_n^G +\sqrt{n}=2n^{3/2+5p} +\sqrt{n}$. For $n$ {\em sufficiently large}, Corollary $2.6$ applied for $r={3/2+6p}, a=\sqrt{2}, b=1/2+p$ gives a Hilbert space $\h$ and maps $h_n:H\rightarrow \h$ such that $\parallel h_n(s) \parallel = 1 \ \forall s\in H$ and 
\begin{itemize}
\item $\mid 1-\langle h_n(s),h_n(\tilde{s}) \rangle \mid \leq \frac{1}{4n^{1+2p}} \mbox{ provided } d_H(s,\tilde{s})\leq 2S_n^G+\sqrt{n}$
\item $\mid \langle h_n(s),h_n(\tilde{s}) \rangle \mid \leq \frac{1}{2}$ whenever $d(s,\tilde{s})\geq S_n^H:= n^\frac{2+8p}{\delta-p}=(n^{1/2+3p} S_n^G)^\frac{1}{\delta-p}$.
\end{itemize}

For $n$ sufficiently large, Lemma \ref{lm:directAgrowth} provides maps $g_n:G\rightarrow l^2(G)$ such that $\parallel g_n(x) \parallel_2=1  ,\ \forall x \in G$ and such that 
\begin{itemize}
\item $\mid 1-\langle g_n(x),g_n(y) \rangle \mid \leq \frac{1}{4n^{1+2p}}$ provided $d_G(x,y)\leq \sqrt{n}$,
\item supp$(g_n(x))\subset B(x,S_n^G)$ for all $x\in G$ where $S_n^G=n^{3/2+5p}.$
\end{itemize}
In the proof of Theorem $4.1$ in \cite{guedad}, Guentner and Kaminker fix $n$ and use the maps $g_n$ and $h_n$ to construct a map $f_n:\Gamma \rightarrow l^2(G,\h)$ such that $\parallel f_n(a) \parallel = 1  ,\ \forall a\in \Gamma$ and
\begin{itemize}
\item $\mid 1-\langle f_n(a),f_n(b) \rangle \mid \leq \frac{1}{2n^{1+2p}}$ if $d(a,b)\leq \sqrt{n}$,
\item $\parallel f_n(a)-f_n(b) \parallel  \geq 1 $ if $d(a,b)\geq 2S_n^G+S_n^H$.
\end{itemize}
Denoting $\overline{S_n}=n^pS_n^H$, we obtain for $n$ larger than some $n_1\in \N_0$ that
\begin{itemize}
\item $\parallel f_n(a)-f_n(b) \parallel \leq \frac{1}{n^{1/2+p}}$ if $d(a,b)\leq \sqrt{n}$,
\item $ \parallel f_n(a)-f_n(b) \parallel  \geq 1$ if $d(a,b)\geq \overline{S_n}$.
\end{itemize}
From the second part of the proof of Proposition \ref{prop:unifembed}, we find a Lipschitz uniform embedding $F$ of $\Gamma$ into a Hilbert space such that 
$\parallel F(x)-F(y) \parallel \geq \overline{\rho}_-(d(x,y)):= \frac{1}{2} \sum_{n=n_1+1}^\infty \sqrt{n-n_1} \chi_{[\overline{S_{n-1}},\overline{S_n}[}(d(x,y))$ for $d(x,y)\in [\overline{S_{n_1}}, \infty [$. A function $u:\R^+ \rightarrow \R^+$ of the form $x\mapsto x^\beta$ for some $\beta>0$ eventually lies under a constant $\overline{C}$ times $\overline{\rho}_-$ whenever there exists $N$ such that for all $n\geq N, \overline{S_{n}}^{\beta}\leq \overline{C} \sqrt{n-n_1}$. For this to be true, we need that $\overline{S_{n}}^{\beta}$, as a function in $n$ is of degree less than $\frac{1}{2}$. Remember that
\[ \overline{S_{n}}^{\beta}= n^{\beta p} n^{\frac{(2+8p)\beta}{\delta-p}}, \]
which is of degree $(\frac{2+8p}{\delta-p}+p)\beta$. 
 The compression of $\Gamma$ is now greater than $\beta$ whenever
 \[ \beta \leq  [2(\frac{2+8p}{\delta-p}+p)]^{-1}.\]
Since $p$ was just any number between $0$ and $\delta$, we decide to let $p$ go to $0$, obtaining that the compression of $\Gamma$ is at least $\delta/4$.
 \end{proof}
 
\subsection{Extensions by a finitely generated word-hyperbolic group}
We start by a lemma similar to Lemma \ref{lm:directAgrowth}.
More precisely, quantifying the proof of Proposition $8.1$ in \cite{Tu}, we obtain
\begin{lm}
Let $G$ be a finitely generated hyperbolic group. Let $p\in ]0,1[$ be any real number. There exists $n_0\in \N$ such that for every natural number $n\geq n_0$, there exists a collection of unit vectors $(g_n(x))_{x\in G}$ in $l^2(G)$ such that 
$\parallel g_n(x) \parallel=1  ,\ \forall x \in G$ and
\begin{itemize}
\item $\mid 1- \langle g_n(x),g_n(y) \rangle \mid \leq \frac{1}{4n^{1+2p}}$ provided $d_G(x,y)\leq \ln(n)$,
\item supp$(g_n(x))\subset B(x,S_n^G)$ for all $x\in G$ where $S_n^G=n^{2+6p}.$
\end{itemize}
\label{lm:directAhyp}
\end{lm}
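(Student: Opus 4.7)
The plan is to follow the strategy of Lemma~\ref{lm:directAgrowth} above, but with characteristic functions of balls replaced by characteristic functions of \emph{shadows} of long geodesic segments. In a group of exponential growth the ball argument of Lemma~\ref{lm:directAgrowth} fails outright, since the ratio $|B_{r+R}|/|B_{r-R}|$ cannot be made close to $1$; however, in a $\delta$-hyperbolic group two geodesics from a fixed basepoint towards points at distance $\rho$ of one another fellow-travel for all but their last $O(\rho+\delta)$ steps, so the shadows of nearby points have small symmetric difference. This is exactly the estimate Tu exploits in Proposition~8.1 of \cite{Tu}; the present lemma just amounts to making his constants explicit in terms of $n$.

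Concretely, I would fix a hyperbolicity constant $\delta\geq 0$ and a basepoint $e\in G$, and associate by a choice function to every $x\in G$ a geodesic ray $\gamma_x\colon\N\to G$ with $\gamma_x(0)=e$ that passes within $\delta$ of $x$. For small $|x|$ this ray needs to be extended past $x$, which is possible because every non-elementary hyperbolic group admits geodesic rays through every vertex (the elementary cases, finite or virtually cyclic, are handled directly). For a parameter $R>0$ to be fixed later, set
\[ A_R(x):=\{g\in G : d(g,\gamma_x)\leq \delta,\ d(e,x)-R\leq d(e,g)\leq d(e,x)+\delta\} \]
and define $g_n(x):=\chi_{A_R(x)}/\sqrt{|A_R(x)|}\in \ell^2(G)$. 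By construction $\|g_n(x)\|_2=1$ and the support of $g_n(x)$ is contained in $B(x,R+2\delta)$.

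For the overlap estimate, assume $d(x,y)\leq \rho:=\ln(n)$. The thin-triangle property of $\delta$-hyperbolic spaces implies that the initial portions of $\gamma_x$ and $\gamma_y$, up to their last $O(\rho)$ steps, stay within a uniform Hausdorff distance of one another. Combined with the bounded degree of the Cayley graph, this yields $|A_R(x)\triangle A_R(y)|\leq C_\delta\,\rho$ for a constant $C_\delta$ depending only on $\delta$ and the generating set. Since $|A_R(x)|$ is bounded below and above by constant multiples of $R$, one gets
\[ |1-\langle g_n(x),g_n(y)\rangle|\leq \frac{|A_R(x)\triangle A_R(y)|}{|A_R(x)|}\leq \frac{C_\delta'\,\ln(n)}{R}. \]
Choosing $R=\lceil 4C_\delta'\,n^{1+2p}\ln(n)\rceil$ makes the right-hand side at most $\frac{1}{4n^{1+2p}}$, and for $n$ sufficiently large one has $R+2\delta\leq n^{2+6p}=S_n^G$, so the support condition also holds.

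The main obstacle I expect is the explicit linear-in-$\rho$ symmetric-difference estimate for the shadows. The Morse/thin-triangle lemma itself is standard, but extracting from it a bound of the form $|A_R(x)\triangle A_R(y)|\leq C_\delta\,\rho$ requires careful bookkeeping: the two tubes may be offset relative to $e$ by up to $\rho$ at each end, the geodesics diverge in a window of length $O(\rho+\delta)$ before re-synchronising, and one has to use bounded valence of the Cayley graph to turn ``number of levels of disagreement'' into a count of elements. A secondary technicality is the consistent extension of $\gamma_x$ past $x$ when $|x|$ is small, handled either by passing to the Gromov boundary or by treating the elementary hyperbolic groups separately.
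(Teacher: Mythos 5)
Your overall strategy (shadows of geodesics instead of balls, thin triangles instead of polynomial growth) is the right one and is indeed what the paper does via Tu's Proposition~8.1, but your key quantitative step is not correct as stated, and the error is not mere bookkeeping. The claim $|A_R(x)\triangle A_R(y)|\leq C_\delta\,\rho$ fails for $\delta$-tubes around single chosen geodesics: even when $\gamma_x$ and $\gamma_y$ fellow-travel within Hausdorff distance $2\delta$ over the whole annulus, a point at distance exactly $\delta$ from $\gamma_x$ can be at distance $\delta+1$ from $\gamma_y$, and this can happen at every level of the tube, so the symmetric difference is generically of order $R$ (the length of the tube), not of order $\rho$. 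Worse, $A_R(x)$ depends on the arbitrary choice of the ray $\gamma_x$, and two different geodesic rays through the same $x$ already have $\delta$-tubes with symmetric difference proportional to $R$; so the assignment $x\mapsto g_n(x)$ is not even stable under the choice function, let alone Lipschitz in $x$. With a false Lipschitz estimate the choice $R\approx n^{1+2p}\ln n$ and the resulting support bound collapse.

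The missing idea is an \emph{averaging} that converts ``fellow-travelling up to the Gromov product'' into an $\ell^1$-Lipschitz bound. This is exactly what the paper (following Tu) does: it fixes a boundary point $a\in\partial G$, takes $F(x,k,n)$ to be the characteristic function of the union of the segments $g([n,2n])$ over \emph{all} geodesics $g$ from \emph{all} points $y\in B(x,k)$ to $a$, and then averages over the radius $k<\sqrt{n}$, setting $H(x,n)=n^{-(3/2-q)}\sum_{k<\sqrt n}F(x,k,n)$. The point is the monotonicity $F(x,k,n)\leq F(y,k+\rho,n)$ when $d(x,y)\leq\rho$, so the sums over $k$ telescope and differ in only about $2\rho$ of the $\sqrt n$ terms; this is what produces a bound linear in $\rho$. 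Note that even with this device the gain in the denominator is only $k(n)^{1/2-q}$ rather than the full radius, which is why the paper must take $k(n)=n^{2+5p}$ and ends up with $S_n^G=n^{2+6p}$; your proposed $R\sim n^{1+2p}\ln n$ is far too optimistic and would yield a better exponent in Theorem~\ref{th:extension} than the paper actually obtains --- a further sign that the linear-in-$R$ denominator cannot be right. To repair your argument you would need to replace the single characteristic function $\chi_{A_R(x)}$ by such an average (over radii, or over all geodesics through a ball), at which point you are essentially reproducing the paper's proof with its weaker exponents.
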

\begin{proof}
Choose $q>0$ such that $(2+5p)(1/2-q)> 1+2p$ and choose $a\in \partial G$, the Gromov boundary of $G$. For all $x\in X$, let $[[x,a[[$ be the set of infinite geodesics from $x$ to $a$, i.e. isometries $g:\N \rightarrow X$ such that $g(0)=x$ and $\lim_{n\to \infty} g(n)=a$. For every $x\in G$ and $k,n\in \N_0$, we define elements of $l^1(G)$ as follows:
\[ F(x,k,n) = \mbox{ characteristic function of } \bigcup_{d(x,y)<k, g\in [[y,a[[} g([n,2n]) ,\]
\[ H(x,n) = \frac{1}{n^{3/2-q}} \sum_{k<\sqrt{n}} F(x,k,n) .\]
Define $k(n)=n^{2+5p}$.
Following the proof of Proposition $8.1$ in \cite{Tu}, we find constants $C,D >0$ such that the $H(x,n)_{x\in G, n\in \N_0}$ satisfy the following conditions for all $n$ greater than some natural number $n_0$:
\begin{enumerate}
\item $\parallel H(x,k(n)) \parallel_1 \geq 1$
\item $\parallel H(x,k(n)) - H(y,k(n)) \parallel_1 \leq \frac{C\ln(n)+D}{(k(n))^{1/2-q}} \leq \frac{1}{4n^{1+2p}}$ provided $d(x,y)\leq \ln(n)$
\item supp$(H(x,k(n))) \subset B(x,n^{2+6p})$.
\end{enumerate}
For all $x\in G, n\geq n_0$, set $g(x,n)=\sqrt{\frac{H(x,k(n))}{\parallel H(x,k(n)) \parallel_1}}$ to obtain a collection of elements of $l^2(G)$. Calculating as in the end of the proof of Lemma \ref{lm:directAgrowth} and using the fact that $\forall z\in G: H(x,n)(z)\geq 0$, we obtain
\begin{eqnarray*}
\parallel g(x,n) - g(y,n) \parallel_2 ^2 & \leq & \parallel \frac{H(x,k(n))}{\parallel H(x,k(n)) \parallel_1} - \frac{H(y,k(n))}{\parallel H(y,k(n)) \parallel_1} \parallel_1 \\
& \leq & \frac{ \parallel H(x,k(n)) - H(y,k(n)) \parallel_1}{\parallel H(x,k(n)) \parallel_1} \\
& & + \parallel H(y,k(n)) \parallel_1 (\frac{1}{\parallel H(x,k(n)) \parallel_1} - \frac{1}{\parallel H(y,k(n)) \parallel_1}) \\
& \leq & 2\frac{\parallel H(x,k(n))-H(y,k(n)) \parallel_1 }{\parallel H(x,k(n)) \parallel_1 } \\
& \leq & 2 \parallel H(x,k(n))-H(y,k(n)) \parallel_1  .
\end{eqnarray*}
The $g(x,n)$ with $n\geq n_0$ satisfy the conditions of this Lemma.
\end{proof}
\begin{theorem}
Assume that $\Gamma$ is a finitely generated group, equipped with the word length function $l=l_{\Gamma}$ relative to some finite symmetric generating subset $S$ and that it  fits in a short exact sequence
\[ 1 \rightarrow H \rightarrow \Gamma \stackrel{\pi}{\rightarrow} G \rightarrow 1. \]
Equip $G$ with the word length function $l_G$ relative to $\pi(S)$. If $G$ is a finitely generated hyperbolic group in the sense of Gromov \cite{Gromovhyperbolicgroups} and if $H$, with the induced metric from $\Gamma$, has Hilbert space compression $\delta$, then the Hilbert space compression of $\Gamma$ is at least $\delta/5$.
\label{th:extension}
\end{theorem}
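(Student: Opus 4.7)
The plan is to mimic the proof of Theorem \ref{th:uitbreiding} almost verbatim, with two substitutions: Lemma \ref{lm:directAhyp} replaces Lemma \ref{lm:directAgrowth}, and the quasi-geodesic strengthening of Proposition \ref{prop:unifembed} supplied by Remark \ref{remark:quageo} replaces the standard version. The latter substitution is legitimate because $\Gamma$, being finitely generated with its word metric, is a quasi-geodesic space.

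Fix $0 < p < \delta$. I apply Corollary \ref{cor:explicit} to $H$ with data $R_n = n^r$ where $r = 2 + 7p$ and $\epsilon_n = 1/(\sqrt 2\, n^{1/2+p})$, i.e.\ $b = 1/2 + p$ and $a = \sqrt 2$. The choice $r = 2 + 7p$ is just large enough that eventually $R_n \geq 2 S_n^G + \ln n = 2 n^{2+6p} + \ln n$. Corollary \ref{cor:explicit} then yields a Hilbert space $\h$ and unit vectors $h_n : H \to \h$ with $\mid 1 - \langle h_n(s), h_n(\tilde s)\rangle \mid \leq 1/(4 n^{1+2p})$ whenever $d_H(s,\tilde s) \leq 2 S_n^G + \ln n$, and $\mid \langle h_n(s), h_n(\tilde s)\rangle \mid \leq 1/2$ whenever $d_H(s,\tilde s) \geq S_n^H := n^{(r+b+p)/(\delta-p)} = n^{(5/2 + 9p)/(\delta - p)}$. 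Simultaneously, Lemma \ref{lm:directAhyp} provides unit vectors $g_n : G \to l^2(G)$ with $\mid 1 - \langle g_n(x), g_n(y)\rangle \mid \leq 1/(4 n^{1+2p})$ for $d_G(x,y) \leq \ln n$ and support inside $B(x, S_n^G)$, where $S_n^G = n^{2+6p}$.

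Combining $g_n$ and $h_n$ by the construction from the proof of Theorem 4.1 of \cite{guedad}, exactly as in the proof of Theorem \ref{th:uitbreiding}, I obtain unit vectors $f_n : \Gamma \to l^2(G, \h)$ satisfying $\mid 1 - \langle f_n(a), f_n(b) \rangle \mid \leq 1/(2 n^{1+2p})$ when $d(a,b) \leq \ln n$, and $\parallel f_n(a) - f_n(b) \parallel \geq 1$ when $d(a,b) \geq 2 S_n^G + S_n^H$. Writing $\overline{S_n} = n^p S_n^H$ (which eventually dominates $2 S_n^G + S_n^H$), and applying the quasi-geodesic version of the second half of the proof of Proposition \ref{prop:unifembed} afforded by Remark \ref{remark:quageo}, I obtain for some $n_1$ a Lipschitz uniform embedding $F : \Gamma \to$ Hilbert space with $\parallel F(a) - F(b) \parallel \geq \frac{1}{2} \sum_{n > n_1} \sqrt{n - n_1}\, \chi_{[\overline{S_{n-1}}, \overline{S_n})}(d(a,b))$. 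The function $t \mapsto t^\beta$ lies eventually below a constant multiple of this lower bound precisely when $\beta \cdot \deg(\overline{S_n}) \leq 1/2$. Since $\deg(\overline{S_n}) = p + (5/2 + 9p)/(\delta - p) \to 5/(2\delta)$ as $p \downarrow 0$, letting $p \to 0$ yields $\alpha(\Gamma) \geq \delta/5$.

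The main technical concern is bookkeeping of the exponents and confirming that the construction in the proof of Theorem 4.1 of \cite{guedad} still yields the two stated inequalities in the $\ln n$ regime rather than in the $\sqrt n$ regime used for Theorem \ref{th:uitbreiding}. The improvement from the constant $\delta/4$ of Theorem \ref{th:uitbreiding} to $\delta/5$ here can be traced directly to the larger support radius in Lemma \ref{lm:directAhyp} (degree $2$ in the limit $p \to 0$) compared with Lemma \ref{lm:directAgrowth} (degree $3/2$): this forces the parameter $r$ fed to Corollary \ref{cor:explicit} up by roughly $1/2$, which propagates through $(r + b + p)/(\delta - p)$ into the degree of $\overline{S_n}$ and hence into the final compression constant.
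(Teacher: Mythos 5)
Your proposal is correct and follows essentially the same route as the paper's proof: Corollary \ref{cor:explicit} with $r=2+7p$, $a=\sqrt{2}$, $b=1/2+p$ applied to $H$, Lemma \ref{lm:directAhyp} for $G$, the Guentner--Dadarlat combination from Theorem 4.1 of \cite{guedad}, and the quasi-geodesic improvement of Remark \ref{remark:quageo}, leading to the same exponent $\frac{(5/2)+9p}{\delta-p}+p$ and the limit $\delta/5$. The only cosmetic difference is that you state the almost-invariance of $h_n$ on the range $d_H(s,\tilde{s})\leq 2S_n^G+\ln n$ where the paper uses $2S_n^G+\sqrt{n}$; both follow from the choice $r=2+7p$ and both suffice for the combination step.
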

\begin{proof}
The proof is analogous to the proof of Theorem \ref{th:uitbreiding}. First, denoting $S_n^G=n^{2+6p}$ as in Lemma \ref{lm:directAhyp} and using Corollary \ref{cor:explicit} for $r=2+7p, a=\sqrt{2}, b=(1/2)+p$, we find a Hilbert space $\h$ and unit vectors $(h_n(s))_{s\in H}\in \h$ for every $n$ large enough such that
\begin{itemize}
\item $\mid 1-\langle h_n(s),h_n(\tilde{s}) \rangle \mid \leq \frac{1}{4n^{1+2p}} \mbox{ provided } d_H(s,\tilde{s})\leq 2S_n^G+\sqrt{n}$
\item $\mid \langle h_n(s),h_n(\tilde{s}) \rangle \mid \leq \frac{1}{2}$ whenever $d(s,\tilde{s})\geq S_n^H:=n^\frac{(5/2)+9p}{\delta-p}=(n^{1/2+3p}S_n^G)^{\frac{1}{\delta-p}}$.
\end{itemize}

Recall from Lemma \ref{lm:directAgrowth} that, for $n$ large enough, we've proven the existence of maps $g_n:G\rightarrow l^2(G)$ such that $\parallel g_n(x) \parallel_2=1  ,\ \forall x \in G$ and such that 
\begin{itemize}
\item $\mid 1-\langle g_n(x),g_n(y) \rangle \mid \leq \frac{1}{4n^{1+2p}}$ provided $d_G(x,y)\leq \ln(n)$,
\item supp$(g_n(x))\subset B(x,S_n^G)$ for all $x\in G$ where $S_n^G=n^{2+6p}.$
\end{itemize}
The proof of Theorem $4.1$ in \cite{guedad} gives for every $n\in \N$ larger than some $n_1\in \N$, a map $f_n:\Gamma \rightarrow l^2(G,\h)$ such that $\parallel f_n(a) \parallel = 1  ,\ \forall a\in \Gamma$ and
\begin{itemize}
\item $\parallel f_n(a)-f_n(b) \parallel \leq \frac{1}{n^{1/2+p}}$ if $d(a,b)\leq \ln(n)$,
\item $ \parallel f_n(a)-f_n(b) \parallel  \geq 1$ if $d(a,b)\geq \overline{S_n}:=n^pS_n^H$.
\end{itemize}
Remark \ref{remark:quageo} implies the existence of a Lipschitz uniform embedding $F$ of $\Gamma$ into a Hilbert space such that 
$\parallel F(x)-F(y) \parallel \geq \overline{\rho}_-(d(x,y)):= \frac{1}{2} \sum_{n=n_1+1}^\infty \sqrt{n-n_1} \chi_{[\overline{S_{n-1}},\overline{S_n}[}(d(x,y))$ for $d(x,y)\in [\overline{S_{n_1}}, \infty [$. A function $u:\R^+ \rightarrow \R^+$ of the form $x\mapsto x^\beta$ for some $\beta>0$ eventually lies under a constant $\overline{C}$ times $\overline{\rho}_-$ whenever there exists $N$ such that for all $n\geq N, \overline{S_{n}}^{\beta}\leq \overline{C} \sqrt{n-n_1}$. For this to be true, we need that $\overline{S_{n}}^{\beta}$, as a function in $n$ is of degree less than $\frac{1}{2}$. 
 Therefore, the compression of $\Gamma$ is greater than $\beta$ whenever
 \[ \beta \leq  [2(\frac{(5/2)+9p}{\delta-p}+p)]^{-1}.\]
 Since this is true for any arbitrarily small $p$, we decide to let $p$ go to $0$, obtaining that the compression of $\Gamma$ is at least $\delta/5$.
 \end{proof}
 
Alain Valette pointed out to the author that a stronger result is valid in the following special case.
 \begin{theorem}
 Let $A$ and $G$ be finitely generated groups, each equipped with the word length metric relative to a finite symmetric generating subset. Assume that $A$ is abelian, that $G$ is a finitely generated word-hyperbolic group and that
 \[ 0 \rightarrow H \rightarrow \Gamma \rightarrow G \rightarrow 1 ,\]
 is a central extension. The compression of $\Gamma$, equipped with the word length metric relative to a finite symmetric generating subset, equals $1$.
 \label{th:hyperbolicspecialcase}
 \end{theorem}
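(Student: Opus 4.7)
The plan is to establish a quasi-isometry between $\Gamma$ and the direct product $A\times G$, at which point the conclusion follows from standard facts about compression of products and its invariance under quasi-isometry.

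First, I would record the relevant input compressions. Every finitely generated abelian group $A$ has Hilbert space compression $1$, since it is quasi-isometric to some $\Z^n$ which embeds isometrically into Euclidean space. Every finitely generated word-hyperbolic group $G$ also has Hilbert space compression $1$; this can be seen for instance via the quasi-isometric embedding of $G$ into a finite product of simplicial trees, or via the Bonk--Schramm embedding of $G$ into a real hyperbolic space, both of which embed into a Hilbert space with compression $1$. Since the Hilbert space compression of a direct product (with the sum metric) is the minimum of the compressions of its factors, the product $A\times G$ has compression $1$.

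The central step is to show that $\Gamma$ is quasi-isometric to $A\times G$. Being a central extension, $\Gamma$ is classified by a cohomology class $[\omega]\in H^{2}(G,A)$ with trivial $G$-action. By a theorem of Mineyev, the comparison map $H^{2}_{b}(G,A)\to H^{2}(G,A)$ is surjective when $G$ is word-hyperbolic, so $[\omega]$ admits a bounded representative cocycle $\omega:G\times G\to A$ (applied componentwise to the free part of $A$, the finite torsion part contributing nothing at the level of quasi-isometry). A theorem of Gersten then implies that whenever a central extension is defined by a bounded $2$-cocycle, a set-theoretic section $s:G\to \Gamma$ yields a quasi-isometry $A\times G \to \Gamma$, $(a,g)\mapsto a\cdot s(g)$: the boundedness of $\omega$ forces the twisted product structure on $\Gamma$ to lie at uniformly bounded distance from the untwisted product structure on $A\times G$.

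Combining the previous paragraphs, $\Gamma$ is quasi-isometric to $A\times G$, which has compression $1$. Since Hilbert space compression is a quasi-isometric invariant of finitely generated groups (as noted in the introduction) and is always at most $1$, the compression of $\Gamma$ equals $1$. The main obstacle is the appeal to Mineyev's bounded cohomology result combined with Gersten's quasi-isometry theorem; it is precisely the centrality and hyperbolicity assumptions that make these structural tools available, and this is what allows us to bypass the general extension machinery of Section \ref{section:extensions}. The resulting value is markedly stronger than the bound $\delta/5 = 1/5$ that Theorem \ref{th:extension} would give in this setting.
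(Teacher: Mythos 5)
Your proposal follows essentially the same route as the paper: surjectivity of the comparison map $H^2_b(G,A)\to H^2(G,A)$ for hyperbolic $G$ (the paper cites Neumann--Reeves rather than Mineyev, but this is only a difference of attribution), then Gersten's result to conclude $\Gamma$ is quasi-isometric to $A\times G$, and finally the product and quasi-isometry invariance facts together with compression $1$ for abelian and hyperbolic groups. Your write-up is correct and in fact slightly more detailed than the paper's in justifying the input compressions and in handling the torsion part of $A$.
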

 \begin{proof}
 Denote the second bounded cohomology group of $G$, defined using bounded cocycles, by $H^2(G,H)$. By \cite{Neumann}, the comparison map
 \[ H_b^2(G,H) \rightarrow H^2(G,H) ,\]
 is onto for $G$ hyperbolic, i.e. every $2$-cocycle has a bounded representative.

Now, let $s:G\rightarrow \Gamma$ be a (set-theoretic) section, i.e. $p\circ s = Id_G$ and define
\[ c(x,y)= s(xy)^{-1} s(x)s(y) \ \forall x,y \in G .\]
By the above, we can assume that $c$ is bounded and so Gersten's result implies that $\Gamma$ is quasi-isometric to $G\times H$. Consequently, the compression of $\Gamma$ equals the minimum of the compressions of $G$ and $H$ \cite{guekam}, which is $1$ (see \cite{tess}, \cite{brodskiy}).
 
 \end{proof}
\noindent
{\bf Acknowledgements}\\

\noindent
I thank Alain Valette for reading previous versions of this article, for introducing me into the world of Hilbert space compression, for encouragements and for pointing out the result formulated in Theorem \ref{th:hyperbolicspecialcase}. I thank Paul Igodt for encouragements and his support. I thank Pieter Penninckx for useful suggestions and critical remarks and I thank Nansen Petrosyan and Pierre de la Harpe for very interesting conversations.

\end{document}